\newtheorem{Th}{Theorem}[section]
\newtheorem{Prop}[Th]{Proposition}
\newtheorem{Def}[Th]{Definition}
\newtheorem{Cor}[Th]{Corollary}
\newtheorem{Question}{Question}
\newcommand{\dist}{\mathop{\rm dist}}
\newcommand{\ag}{\alpha}
\newcommand{\Lc}{\mathcal{L}}
\newcommand{\Ss}{\mathcal{SS}}
\newcommand{\SC}{\mathcal{SC}}
\newcommand{\cS}{\Phi\mathcal{S}}
\newcommand{\cC}{\Phi\mathcal{C}}
\newcommand{\In}{{\mathcal{I}n}}
\begin{document}

\title[On the perturbation classes problem]{Two classes of operators related to the perturbation classes problem}

\thanks{Supported in part by MINCIN Project PID2019-103961.\newline
2010 Mathematics Subject Classification. Primary: 47A53, 47A55.}

\author[M. Gonz\'alez]{Manuel Gonz\'alez}
\address{Departamento de Matem\'aticas, Facultad de Ciencias,  Universidad de Cantabria, E-39071 Santander, Espa\~na}
\email{manuel.gonzalez@unican.es}

\author{Margot Salas-Brown}
\address{Escuela de Ciencias Exactas e Ingenier\'\i a, Universidad Sergio Arboleda, Bogot\'a, Colombia}
\email{salasbrown@gmail.com}

\date{\today}

\begin{abstract}
Let $\Ss$ and $\SC$ be the strictly singular and the strictly cosingular operators acting between Banach spaces, and let $P\Phi_+$ and $P\Phi_+$ be the perturbation classes for the upper and the lower semi-Fredholm operators.
We study two classes of operators $\cS$ and $\cC$ that satisfy $\Ss\subset \cS \subset P\Phi_+$ and $\SC\subset \cC\subset P\Phi_-$. We give some conditions under which these inclusions become equalities, from which we derive some positive solutions to the perturbation classes problem for semi-Fredholm operators.
\end{abstract}

\maketitle
\thispagestyle{empty}

\section{Introduction}

The perturbation classes problem asks whether the perturbation classes for the upper semi-Fredholm operators $P\Phi_+$ and the lower semi-Fredholm operators $P\Phi_-$ coincide  with the classes of strictly singular operators $\Ss$ and strictly cosingular operators $\SC$, respectively. This problem was raised in \cite{GohbergMF:60} (see also \cite{CaradusPY:74,Pietsch:80}), and it has a positive answer in some cases \cite{GM-AS:10,GPSB14, GPSB20, Gonzalez:10,Weis:81}, but the general answer is negative in both cases
\cite{Gonzalez:03}, \cite[Theorem 4.5]{GimenezGM-A:12}. However, it remains interesting to find positive answers in special cases because the definitions of $\Ss$ and $\SC$ are intrinsic: to check that $K$ is in one of them only involves the action of $K$, while to check that $K$ is in $P\Phi_+$ or $P\Phi_+$ we have to study the properties of $T+K$ for $T$ in a large set of operators.

In this paper we consider two classes $\cS$ and $\cC$ introduced in \cite{AienaGM-A:12} that satisfy
$$
\Ss(X,Y)\subset \cS(X,Y)\subset P\Phi_+(X,Y) \textrm{ and } \SC(X,Y)\subset \cC(X,Y)\subset P\Phi_-(X,Y),
$$
we study conditions on the Banach spaces $X,Y$ so that some of these four inclusions become equalities, and we derive new positive answers to the perturbation classes problem for semi-Fredholm operators. In the case $\Phi_+(X,Y)\neq \emptyset$, we show that $\cS(X,Y)= P\Phi_+(X,Y)$ when $Y\times Y$ is isomorphic to $Y$, $\Ss(X,Y)= \cS(X,Y)$ when some quotients of $X$ embed in $Y$ (Theorem \ref{equalities}), and adding up we get conditions implying that $\Ss(X,Y)= P\Phi_+(X,Y)$ (Theorem \ref{adding+}). In the case $\Phi_-(X,Y)\neq \emptyset$ we prove similar results (Theorems \ref{equalities-} and \ref{adding-}). We also state some questions concerning the classes $\cS$ and $\cC$. 
\medskip

\emph{Notation.}
Along the paper, $X$, $Y$ and $Z$ denote Banach spaces and $\Lc(X,Y)$ is the set of bounded operators from $X$ into $Y$. We write $\Lc(X)$ when $X=Y$. 
%
Given a closed subspace $M$ of $X$, we denote $J_M$ the inclusion of $M$ into $X$, and $Q_M$ the quotient map from $X$ onto $X/M$. An operator $T\in \Lc(X,Y)$ is an \emph{isomorphism} if there exists $c>0$ such that $\|Tx\|\geq c\|x\|$ for every $x\in X$.

The operator $T\in\Lc(X,Y)$ is \emph{strictly singular}, and we write $T\in\Ss$, when there is no infinite dimensional closed  subspace $M$ of $X$ such that the restriction $TJ_M$ is an isomorphism; and $T$ is \emph{strictly cosingular}, and we write $T\in\SC$, when there is no infinite codimensional closed subspace $N$ of $Y$ such that $Q_NT$ is surjective.
Moreover, $T$ is \emph{upper semi-Fredholm,} $T\in\Phi_+$, when the range $R(T)$ is closed and the kernel $N(T)$ is finite dimensional; $T$ is \emph{lower semi-Fredholm,} $T\in\Phi_-$, when $R(T)$ is finite codimensional (hence closed); $T$ is \emph{Fredholm,} $T\in \Phi$, when it is upper and lower semi-Fredholm; and $T$ is \emph{inessential,} $T\in\In$, when $I_X-ST\in\Phi$ for each $S\in \Lc(Y,X)$.

\section{Preliminaries}

The perturbation class $P\mathcal S$ of a class of operators $\mathcal S$ is defined in terms of its components:

\begin{Def}
Let $\mathcal S$ denote one of the classes $\Phi_+$, $\Phi_-$ or $\Phi$.
For spaces $X,Y$ such that $\mathcal S(X,Y)\neq \emptyset$, 
$$
P\mathcal S(X,Y)= \{K\in \Lc(X,Y) : \textrm{ for each } T\in \mathcal S(X,Y), T+K\in\mathcal S\}.
$$
\end{Def}

We could define $P\mathcal S(X,Y)= \Lc(X,Y)$ when  $\mathcal S(X,Y)$ is empty, but this is not useful.
The components of $P\Phi$ coincide with those of the operator ideal of inessential operators $\In$ when they are defined \cite{AG:98}, but given $S\in\Lc(Y,Z)$ and $T\in\Lc(X,Y)$, $S$ or $T$ in $P\Phi_+$ does not imply $ST\in P\Phi_+$, and similarly for $P\Phi_-$ \cite{Gonzalez:03}. However, the following result holds, and it will be useful for us.

\begin{Prop}\label{prod-PF+}
Suppose that $K\in P\Phi_+(X,Y)$, $L\in P\Phi_-(X,Y)$, $S\in \Lc(Y)$ and $T\in \Lc(X)$. Then $SK, KT\in P\Phi_+(X,Y)$ and $SL, LT\in P\Phi_-(X,Y)$.
\end{Prop}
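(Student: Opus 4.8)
The plan is to deduce all four assertions from two structural facts about the perturbation classes together with a decomposition trick that turns an arbitrary multiplier into an invertible one, so that the module property is never needed directly.

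First I would check that $P\Phi_+(X,Y)$ is a linear subspace of $\Lc(X,Y)$, and likewise $P\Phi_-(X,Y)$. Closure under scalars is immediate: if $A\in\Phi_+(X,Y)$ and $\lambda\neq 0$, then $\lambda^{-1}A\in\Phi_+$, hence $\lambda^{-1}A+K\in\Phi_+$, and therefore $A+\lambda K=\lambda(\lambda^{-1}A+K)\in\Phi_+$. Closure under sums is just as direct: for $K_1,K_2\in P\Phi_+(X,Y)$ and $A\in\Phi_+(X,Y)$ one has $A+K_1\in\Phi_+$, and applying the definition of $K_2$ to this operator gives $(A+K_1)+K_2\in\Phi_+$. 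The same two computations work verbatim for $P\Phi_-(X,Y)$, using that $\Phi_-$ is stable under multiplication by nonzero scalars.

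Next I would record the easy, invertible, case. If $U\in\Lc(Y)$ is an isomorphism and $K\in P\Phi_+(X,Y)$, then for $A\in\Phi_+(X,Y)$ the factorization $A+UK=U(U^{-1}A+K)$ does the job: $U^{-1}A\in\Phi_+$ because $\Phi_+$ is preserved under composition with an isomorphism, so $U^{-1}A+K\in\Phi_+$, and multiplying on the left by the isomorphism $U$ keeps it in $\Phi_+$. The symmetric factorization $A+KV=(AV^{-1}+K)V$ handles right multiplication by an isomorphism $V\in\Lc(X)$, and the same two factorizations settle the corresponding statements for $P\Phi_-$ (here one uses that $R(UA)=U\,R(A)$ and $R(AV)=R(A)$ are finite-codimensional whenever $R(A)$ is).

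Finally, the general multipliers $S\in\Lc(Y)$ and $T\in\Lc(X)$ are reduced to the invertible case by an additive decomposition. Choosing $\lambda$ with $|\lambda|>\|S\|$ makes $\lambda I_Y+S$ an isomorphism (Neumann series), and writing $SK=(\lambda I_Y+S)K-\lambda K$ exhibits $SK$ as a difference of two elements of $P\Phi_+(X,Y)$: the first lies in $P\Phi_+$ by the invertible case, the second by closure under scalars. Since $P\Phi_+(X,Y)$ is a subspace, $SK\in P\Phi_+(X,Y)$; the identity $KT=K(\mu I_X+T)-\mu K$ with $|\mu|>\|T\|$ gives $KT\in P\Phi_+(X,Y)$, and the identical argument yields $SL,LT\in P\Phi_-(X,Y)$. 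I expect the only genuine obstacle to be the non-invertibility of $S$ and $T$: the tempting reductions by conjugation, or by passing to operator matrices on $X\oplus X$ and $Y\oplus Y$, stall because extracting a single diagonal block forces one to invert a non-invertible operator, whereas the splitting $S=(\lambda I_Y+S)-\lambda I_Y$ avoids this entirely by trading the module structure for the easily verified linear structure of $P\Phi_+$ and $P\Phi_-$.
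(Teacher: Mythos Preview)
Your proof is correct and follows essentially the same approach as the paper: treat the invertible multiplier case via the factorization $A+UK=U(U^{-1}A+K)$, then reduce the general case by writing $S$ as a sum of two invertible operators and using that $P\Phi_+(X,Y)$ is a linear subspace. The paper simply asserts a decomposition $S=S_1+S_2$ with $S_1,S_2$ bijective, whereas you supply the explicit choice $S_1=\lambda I_Y+S$, $S_2=-\lambda I_Y$ with $|\lambda|>\|S\|$; otherwise the arguments are the same.
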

\begin{proof}
Suppose that $K\in P\Phi_+(X,Y)$ and let $S\in \Lc(Y)$ and $U\in\Phi_+(X,Y)$. If $S$ is bijective then $S^{-1}U\in\Phi_+(X,Y)$, hence $U+SK= S(S^{-1}U +K)\in \Phi_+$; thus $SK \in P\Phi_+(X,Y)$. In the general case, $S=S_1+S_2$ with $S_1,S_2$ bijective; thus $SK= S_1K+S_2K \in P\Phi_+(X,Y)$.

The proof of the other three results is similar.
\end{proof}

\section{The perturbation class for $\Phi_+$}
Given two operators $S,T\in\Lc(X,Y)$, we denote by $(S,T)$ the operator from $X$ into $Y\times Y$ defined by $(S,T)x=(Sx,Tx)$, where $Y\times Y$ is endowed with the product norm $\|(y_1,y_2)\|_1= \|y_1\|+ \|y_2\|$.

Inspired by the results of Friedman \cite{Friedman:02}, the authors of \cite{AienaGM-A:12} defined the following class of operators.

\begin{Def}
Suppose that $\Phi_+(X,Y) \neq\emptyset$ and let $K\in\Lc(X,Y)$. We say that $K$ is \emph{$\Phi$-singular,}  and write $K\in \cS$, when for each $S\in \Lc(X,Y)$, $(S,K)\in \Phi_+$ implies $S\in\Phi_+$.
\end{Def}

The definition of $\cS(X,Y)$ is similar to that of $P\Phi_+(X,Y)$, but the former one is easier to handle because the action of $S$ and $K$ is decoupled when we consider  $(S,K)$ instead of $S+K$.

With our notation, \cite[Theorems 3 and 4]{Friedman:02} can be stated as follows:

\begin{Prop}\label{inclusions}\emph{\cite[Proposition 2.2]{AienaGM-A:12}}
Suppose that $\Phi_+(X,Y)\neq\emptyset$. Then
$$
\Ss(X,Y)\subset \cS(X,Y)\subset P\Phi_+(X,Y).
$$
\end{Prop}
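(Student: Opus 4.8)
The goal is to establish the two inclusions $\Ss(X,Y)\subset \cS(X,Y)$ and $\cS(X,Y)\subset P\Phi_+(X,Y)$, using the definitions of $\Ss$, $\cS$, and $P\Phi_+$. The plan is to treat the two inclusions separately, in each case unwinding the definition of membership and producing, from an operator that fails the target condition, a witness that it fails the source condition (proving the contrapositive).

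For the first inclusion $\Ss(X,Y)\subset \cS(X,Y)$, I would take $K\in\Ss(X,Y)$ and an arbitrary $S\in\Lc(X,Y)$ with $(S,K)\in\Phi_+$, and show $S\in\Phi_+$. Since $(S,K)\in\Phi_+$, its kernel $N(S,K)=N(S)\cap N(K)$ is finite dimensional and its range is closed; I want to transfer these properties to $S$. The natural approach is to use the standard characterization that $S\in\Phi_+$ if and only if $S$ restricted to some finite-codimensional subspace is an isomorphism, equivalently $S$ is \emph{strictly singular-free} in the sense that there is no infinite dimensional closed subspace on which $S$ is not bounded below. If $S\notin\Phi_+$, then $S$ is not bounded below on any finite-codimensional subspace, so there is an infinite dimensional closed subspace $M$ on which $S$ is arbitrarily small (a standard consequence; one builds a normalized basic sequence $(x_n)$ with $\|Sx_n\|\to 0$). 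On the span of such a sequence, $(S,K)$ being bounded below forces $\|Kx_n\|$ to stay bounded away from $0$; but then, after passing to a further subspace, $K$ would be bounded below, contradicting $K\in\Ss(X,Y)$. This gives $S\in\Phi_+$, hence $K\in\cS(X,Y)$.

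For the second inclusion $\cS(X,Y)\subset P\Phi_+(X,Y)$, I would take $K\in\cS(X,Y)$ and an arbitrary $T\in\Phi_+(X,Y)$, and show $T+K\in\Phi_+$. The idea is to relate $T+K$ to the operator $(T,K)$. Observe that $(T,K)=(T,0)+(0,K)$, and that $(T,0)\in\Phi_+(X,Y\times Y)$ whenever $T\in\Phi_+(X,Y)$, since its kernel equals $N(T)$ and its range $\{(Tx,0)\}$ is closed and complemented in $Y\times Y$. Thus $(T,K)$ is a finite-rank-style perturbation situation: I expect that $(T,K)\in\Phi_+$. Granting $(T,K)\in\Phi_+$, the defining property of $\cS$ cannot be applied directly to conclude about $T+K$; instead I would compose with the addition map $\sigma\colon Y\times Y\to Y$, $\sigma(y_1,y_2)=y_1+y_2$, noting $\sigma\circ(T,K)=T+K$. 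The cleaner route is to argue via the definition of $\cS$ the other way: to show $T+K\in\Phi_+$, I would verify $(T+K,K)\in\Phi_+$ (since $(T+K,K)$ differs from $(T,0)\in\Phi_+$ by the operator $(K,K)$, and one checks $(T+K,K)$ has the same kernel and closed range behaviour as $(T,0)$ using that the map $(y_1,y_2)\mapsto(y_1-y_2,y_2)$ is an isomorphism of $Y\times Y$ sending $(T+K,K)$ to $(T,K)$), and then apply the definition of $K\in\cS$ to the operator $S=T+K$ to conclude $T+K\in\Phi_+$.

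The main obstacle I anticipate is the bookkeeping in the second inclusion: correctly identifying which of $(T,K)$, $(T+K,K)$, or a linear image thereof lies in $\Phi_+$, and ensuring the invertible change of coordinates on $Y\times Y$ intertwines the operator to which I apply the definition of $\cS$. The first inclusion's main point is the standard but slightly delicate extraction of a basic sequence witnessing failure of $\Phi_+$; I would lean on the characterization of $\Phi_+$ via boundedness below on finite-codimensional subspaces to keep this clean.
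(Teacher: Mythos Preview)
The paper does not supply its own proof of this proposition; it is quoted from \cite[Proposition~2.2]{AienaGM-A:12} (and attributed there to \cite{Friedman:02}), so there is no argument in the present paper to compare against.

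Your plan is essentially correct. For the second inclusion the chain you want is clean once stated in the right order: from $T\in\Phi_+$ you get $(T,K)\in\Phi_+$ directly because $\|(T,K)x\|_1=\|Tx\|+\|Kx\|\geq\|Tx\|$, so $(T,K)$ is bounded below on any finite-codimensional subspace on which $T$ is; then the isomorphism $(y_1,y_2)\mapsto(y_1-y_2,y_2)$ of $Y\times Y$ gives $(T+K,K)\in\Phi_+$; and finally $K\in\cS$ applied with $S=T+K$ yields $T+K\in\Phi_+$. Your write-up blurs this slightly by invoking $(T,0)$ and calling $(0,K)$ a ``finite-rank-style perturbation'' (it is not), and by saying the isomorphism shows $(T+K,K)$ behaves like $(T,0)$ when in fact it shows $(T+K,K)$ behaves like $(T,K)$; but the underlying argument is sound. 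For the first inclusion, your outline is the standard one: if $S\notin\Phi_+$ extract a normalized basic sequence $(x_n)$ with $\sum\|Sx_n\|$ small enough relative to the basis constant and the lower bound of $(S,K)$; then $K$ is bounded below on $[x_n]$, contradicting $K\in\Ss$. Just be explicit that ``passing to a further subspace'' means choosing the $\|Sx_n\|$ summably small so that the lower bound $\|(S,K)x\|\geq c\|x\|$ transfers to $\|Kx\|\geq (c/2)\|x\|$ on the closed span.
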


Note that $\Ss$ is an operator ideal but $P\Phi_+$ is not; $P\Phi_+(X,Y)$ is a closed subspace of $\Lc(X,Y)$, and $P\Phi_+(X)$ is an ideal of $\Lc(X)$.

\begin{Prop}
$\Phi \mathcal{S}(X,Y)$ is  closed in $\Lc(X,Y)$
\end{Prop}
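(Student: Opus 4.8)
The plan is to argue by sequences: take $(K_n)$ in $\cS(X,Y)$ with $\|K_n-K\|\to 0$ for some $K\in\Lc(X,Y)$, and show that $K$ again satisfies the defining property of $\cS(X,Y)$. Concretely, I would fix an arbitrary $S\in\Lc(X,Y)$ such that $(S,K)\in\Phi_+(X,Y\times Y)$, and aim to deduce $S\in\Phi_+(X,Y)$. Since $S$ is arbitrary this yields $K\in\cS(X,Y)$, and hence closedness of $\cS(X,Y)$ in $\Lc(X,Y)$. (Recall that we are in the regime $\Phi_+(X,Y)\neq\emptyset$, so the class $\cS(X,Y)$ is defined.)

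The key observation is that replacing $K$ by $K_n$ perturbs the coupled operator only slightly. Indeed $(S,K)-(S,K_n)=(0,K-K_n)$, and for the product norm $\|(y_1,y_2)\|_1=\|y_1\|+\|y_2\|$ one has $\|(0,K-K_n)x\|_1=\|(K-K_n)x\|$, so that $\|(S,K)-(S,K_n)\|=\|K-K_n\|\to 0$. Now I would invoke the openness of $\Phi_+$ in the operator norm: since $(S,K)\in\Phi_+(X,Y\times Y)$, there is $\varepsilon>0$ such that every $V\in\Lc(X,Y\times Y)$ with $\|V-(S,K)\|<\varepsilon$ also lies in $\Phi_+$. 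By the norm estimate, $(S,K_n)\in\Phi_+(X,Y\times Y)$ for all sufficiently large $n$. Fixing such an $n$ and using $K_n\in\cS(X,Y)$, the defining implication gives $S\in\Phi_+(X,Y)$, as required.

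I do not anticipate a genuine obstacle: the argument relies only on the elementary norm identity for $(0,K-K_n)$ and on the standard fact that the upper semi-Fredholm operators form an open subset of the space of bounded operators (equivalently, that $\Phi_+$ is stable under sufficiently small-norm perturbations). The one point to state with care is that this openness is applied in $\Lc(X,Y\times Y)$, not in $\Lc(X,Y)$; this is legitimate precisely because $(S,K)$ is itself an upper semi-Fredholm operator into $Y\times Y$, so a neighbourhood of it in $\Lc(X,Y\times Y)$ consists of $\Phi_+$ operators.
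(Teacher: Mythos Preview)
Your proof is correct and follows essentially the same approach as the paper: both take a convergent sequence in $\cS(X,Y)$, use the norm identity $\|(S,K)-(S,K_n)\|=\|K-K_n\|$ to show $(S,K_n)\to(S,K)$ in $\Lc(X,Y\times Y)$, invoke the openness of $\Phi_+$ there, and conclude via the defining property of $\cS$ applied to some $K_n$. Your write-up is slightly more explicit about where openness is being applied, but the argument is the same.
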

\begin{proof}
Let $\{T_n\}$ be a sequence in $\Phi \mathcal{S}(X,Y)$ converging to $T\in \Lc(X,Y)$. Suppose that $S\in \Lc(X,Y)$ and $(S,T) \in \Phi_+(X, Y \times Y)$.
Note that the sequence $(S,T_n)$ converge to $(S,T)$, because $\|(S,T_n)- (S,T)\|= \|T_n-T\|$.

Since $ \Phi_+(X, Y \times Y)$ is an open set, there exists a positive integer $N$ such that $(S,T_N)\in \Phi_+(X,Y\times Y)$. Then $T_N\in \Phi \mathcal{S}(X,Y)$ implies $S \in \Phi_+(X,Y)$. Thus $T\in\Phi \mathcal{S}(X,Y)$. 
\end{proof}

We state some basic questions on the class $\cS$. 

\begin{Question}
Suppose that $\Phi_+(X,Y)\neq \emptyset$.
\begin{itemize}
\item[(a)] Is $\cS(X,Y)$ a  subspace of $\Lc(X,Y)$?
\item[(c)] Is $\cS$ an operator ideal?
\item[(c)] Is Proposition \ref{prod-PF+} valid for $\cS$?
\end{itemize}
\end{Question}

Answering a question in \cite{Friedman:02}, an example of an operator $K\in P\Phi_+\setminus \cS$ was given in \cite[Example 2.3]{AienaGM-A:12}, but we do not know if the other inclusion can be strict.

\begin{Question}\label{SS=cS}
Suppose that $\Phi_+(X,Y)\neq \emptyset$.
Is $\Ss(X,Y)=\cS(X,Y)$?
\end{Question}

A negative answer to Question \ref{SS=cS} would provide a new counterexample to the perturbation classes problem for $\Phi_+$. 

Let us see that the inclusions in Proposition \ref{inclusions} become equalities in some cases.

An infinite dimensional Banach space $Y$ is isomorphic to its  square, denoted $Y\times Y\simeq Y$, in many cases: $L_p(\mu)$ and $\ell_p$ ($1\leq p\leq \infty$), $c_0$, and $C[0,1]$. On the other hand, James' space $J$ and some spaces of continuous functions on a compact like $C[0,\omega_1]$ are not isomorphic to their square, where $\omega_1$ is the first uncountable  ordinal. See \cite{Bess-Pel:60} and \cite{Semadeni:60}.

\begin{Th}\label{equalities}
Suppose that the spaces $X$ and $Y$ satisfy $\Phi_+(X,Y)\neq \emptyset$.
\begin{enumerate}
\item If $Y\times Y\simeq Y$ then $\cS(X,Y)= P\Phi_+(X,Y)$.
\item If every infinite dimensional subspace of $X$ has an infinite dimensional subspace $N$ such that $X/N$ embeds in $Y$ then $\Ss(X,Y)=\cS(X,Y)$.
\end{enumerate}
\end{Th}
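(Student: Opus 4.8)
The plan is to establish, in each part, only the inclusion opposite to the one supplied by Proposition \ref{inclusions}: for (1) it suffices to prove $P\Phi_+(X,Y)\subset \cS(X,Y)$, and for (2) it suffices to prove $\cS(X,Y)\subset\Ss(X,Y)$.

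For (1), I would fix an isomorphism $\phi\colon Y\times Y\to Y$ (which exists since $Y\times Y\simeq Y$) and write $\iota_1,\iota_2\colon Y\to Y\times Y$ for the canonical inclusions $\iota_1 y=(y,0)$ and $\iota_2 y=(0,y)$; put $A=\phi\iota_1$ and $B=\phi\iota_2$, so that $A,B\in\Lc(Y)$. Let $K\in P\Phi_+(X,Y)$ and suppose $S\in\Lc(X,Y)$ satisfies $(S,K)\in\Phi_+$. Since $\phi$ is an isomorphism, $\phi\circ(S,K)\in\Phi_+(X,Y)$, and a direct computation gives $\phi\circ(S,K)=AS+BK$. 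By Proposition \ref{prod-PF+} we have $BK\in P\Phi_+(X,Y)$, and since $P\Phi_+(X,Y)$ is a subspace, $-BK\in P\Phi_+(X,Y)$; hence $AS=\phi\circ(S,K)-BK\in\Phi_+(X,Y)$. It then remains to pass from $AS\in\Phi_+$ to $S\in\Phi_+$: this is the standard fact that the inner factor of an upper semi-Fredholm product is upper semi-Fredholm, which I would read off from $N(S)\subset N(AS)$ (giving $N(S)$ finite dimensional) together with the sequential criterion for $\Phi_+$ (if $(x_n)$ is bounded and $(Sx_n)$ is Cauchy, then $(ASx_n)$ is Cauchy, so $(x_n)$ has a convergent subsequence). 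Thus $S\in\Phi_+$, i.e. $K\in\cS(X,Y)$.

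For (2) I would argue by contraposition. Suppose $K\notin\Ss(X,Y)$, so that $KJ_M$ is an isomorphism for some infinite dimensional closed subspace $M\subset X$; in particular $\|Kx\|\ge c_2\|x\|$ for $x\in M$. Applying the hypothesis to $M$ yields an infinite dimensional subspace $N\subset M$ together with an isomorphic embedding $e\colon X/N\to Y$, and I would set $S=\lambda\,e\,Q_N$ for a scalar $\lambda>0$ to be chosen. Then $N(S)=N$ is infinite dimensional, so $S\notin\Phi_+(X,Y)$; it therefore suffices to produce $\lambda$ with $(S,K)\in\Phi_+$, and the claim is that $(S,K)$ is bounded below for $\lambda$ large. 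Indeed $N\subset M$ gives $\|Kn\|\ge c_2\|n\|$ for $n\in N$, while $e$ being an embedding gives $\|Sx\|\ge \lambda c_1\dist(x,N)$. For $x\in X$ choose $n\in N$ with $\|x-n\|\le 2\dist(x,N)$; combining $\|Kx\|\ge c_2\|x\|-(c_2+\|K\|)\|x-n\|$ with the estimate for $\|Sx\|$ yields $\|Sx\|+\|Kx\|\ge c_2\|x\|+(\lambda c_1-2(c_2+\|K\|))\dist(x,N)$, which is $\ge c_2\|x\|$ once $\lambda c_1\ge 2(c_2+\|K\|)$. Hence $(S,K)$ is injective with closed range, so $(S,K)\in\Phi_+$ while $S\notin\Phi_+$, proving $K\notin\cS(X,Y)$.

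The routine pieces are the identity $\phi\circ(S,K)=AS+BK$ and the stability of $\Phi_+$ under composition with an isomorphism. The step I expect to be most delicate is the bounded-below estimate in (2): the point is that $S$ controls $x$ only in the directions transverse to $N$ while $K$ controls $x$ only along $N$, so the two estimates must be patched together, and the cross term $\|K(x-n)\|$ arising for a general $x$ has to be absorbed by scaling $S$. I would also double-check the degenerate case $\dist(x,N)=0$ (where $x\in N$ since $N$ is closed, and the estimate reduces to $\|Kx\|\ge c_2\|x\|$) and confirm that the near-minimizer $n$ can indeed be chosen with the stated bound.
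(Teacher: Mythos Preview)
Your proof is correct and follows essentially the same route as the paper. Part (1) is identical: compose $(S,K)$ with an isomorphism $Y\times Y\to Y$, peel off the $K$-term using Proposition~\ref{prod-PF+}, and deduce $S\in\Phi_+$ from $AS\in\Phi_+$. Part (2) also matches the paper's strategy---contraposition, pass to $N\subset M$ via the hypothesis, set $S$ to be an embedding composed with $Q_N$, and show $(S,K)$ is bounded below---the only difference being cosmetic: the paper normalizes $\|Sx\|\ge\dist(x,N)$ and then splits into the cases $\dist(x,N)\ge\alpha$ and $\dist(x,N)<\alpha$ on the unit sphere, whereas you scale $S$ by a large $\lambda$ so that the cross-term $(c_2+\|K\|)\|x-n\|$ is absorbed directly, which is arguably a bit cleaner.
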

\begin{proof}
(1) Let $U:Y\times Y\to Y$ be a bijective isomorphism and let $V,W\in\Lc(Y)$ such that $U(y_1,y_2)=Vy_1+ Wy_2$.

If $K\in P\Phi_+(X,Y)$, for each  $S\in\Lc(X,Y)$ such that $(S,K)\in \Phi_+$ we have $U(S,K)=VS+ WK\in \Phi_+$. By Proposition \ref{prod-PF+}, $WK\in P\Phi_+(X,Y)$. Then $VS\in\Phi_+$, hence $S\in\Phi_+$. Thus we conclude that $K\in \cS(X,Y)$.
\medskip

(2) Let $K\in \Lc(X,Y)$, $K\notin \Ss$. By the hypothesis there exists an  infinite dimensional subspace $N$ of $X$ such that $KJ_N$ is an isomorphism, and there is an isomorphism $U:X/N\to Y$. Then $S=UQ_N\in\Lc(X,Y)$ is not upper semi-Fredholm. We will prove that $K\notin\cS$ by showing that $(S,K)\in \Phi_+$.

Recall that $\|Q_Nx\|= \dist(x,N)$. We can choose the isomorphism $U$ so that $\|Sx\|=\|UQ_Nx\|\geq \dist(x,N)$ for each $x\in X$. Moreover, there is a constant $c>0$ such that $\|Kn\|\geq c\|n\|$ for each $n\in N$.

Let $x\in X$ with $\|x\|=1$, and let $0<\ag<1$ such that $c(1-\ag)= 2\|K\|\ag$.

If $\dist(x,N)\geq \ag$ then $\|Sx\|\geq \ag$.
Otherwise there exists $y\in N$ such that $\|x-y\|<\ag$; hence $\|y\|> 1-\ag$. Therefore
$$
\|Kx\|\geq \|Ky\|-\|K(x-y)\| \geq c(1-\ag)-\|K\|\ag=\|K\|\ag.
$$
Then $\|(S,K)x\|_1=\|Sx\|+\|Kx\|\geq \min\{\|K\|\ag, \ag\}$, hence $(S,K)$ is an isomorphism; in particular $(S,K)\in\Phi_+$, as we wanted to show.
\end{proof}

In the known examples in which $\Ss(X,Y)\neq P\Phi_+(X,Y)$ in \cite {Gonzalez:03,GimenezGM-A:12}, the space $Y$ has a complemented subspace which is  hereditarily indecomposable in the sense of  \cite{ArgyrosF:00, GowersM:93, GowersM:97}. So the question arises.

\begin{Question}
Suppose that $X$ and $Y$ satisfy $\Phi_+(X,Y)\neq \emptyset$ and $Y\times Y\simeq Y$.

Is $\Ss(X,Y)=P\Phi_+(X,Y)$.
\end{Question}

A Banach space $X$ is \emph{subprojective} if every closed infinite dimensional subspace of $X$ contains an infinite dimensional subspace complemented in $X$. The spaces $c_0$, $\ell_p$ ($1\leq p<\infty)$ and $L_q(\mu)$ ($2\leq q<\infty)$ are subprojective \cite{Whitley:64}. See \cite{GalegoGP17,OikhbergS15} for further examples.

\begin{Cor}
Suppose that $\Phi_+(X,Y)\neq \emptyset$ and the space $X$ is subprojective. Then $\Ss(X,Y)= \cS(X,Y)$.
\end{Cor}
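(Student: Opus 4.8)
The inclusion $\Ss(X,Y)\subset\cS(X,Y)$ holds by Proposition~\ref{inclusions}, so the plan is to prove the reverse inclusion by contraposition: given $K\in\Lc(X,Y)$ with $K\notin\Ss$, I would exhibit an operator $S\in\Lc(X,Y)$ with $(S,K)\in\Phi_+$ but $S\notin\Phi_+$, which shows $K\notin\cS$. Since $K\notin\Ss$, there is an infinite dimensional closed subspace $M$ of $X$ such that $KJ_M$ is an isomorphism. Because $X$ is subprojective, $M$ contains an infinite dimensional closed subspace $N$ that is complemented in $X$; I would write $X=N\oplus P$ and let $Q\in\Lc(X)$ be the projection onto $P$ along $N$, so that $\Ker Q=N$. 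Note that $KJ_N$ is still an isomorphism, say $\|Kn\|\geq c\|n\|$ for all $n\in N$ and some $c>0$.

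Next I would fix any $T\in\Phi_+(X,Y)$, which exists since $\Phi_+(X,Y)\neq\emptyset$, and set $S=TQ\in\Lc(X,Y)$. Since $N\subset\Ker S$ and $N$ is infinite dimensional, $S\notin\Phi_+$. It remains to check that $(S,K)\in\Phi_+(X,Y\times Y)$, for which I would use the standard sequential characterization of upper semi-Fredholm operators: an operator lies in $\Phi_+$ precisely when every bounded sequence whose image converges has a convergent subsequence. So I take a bounded sequence $(x_j)$ in $X$ with $(S,K)x_j$ convergent and decompose $x_j=n_j+p_j$ with $n_j\in N$, $p_j\in P$; this decomposition is bounded because $Q$ is bounded. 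From $Sx_j=Tp_j$ convergent and $T\in\Phi_+$ I obtain a subsequence along which $p_j$ converges, say to $p$. Then $Kp_j$ converges along this subsequence, and since $Kx_j$ also converges, $Kn_j=Kx_j-Kp_j$ converges; the lower bound $\|Kn_j\|\geq c\|n_j\|$ then forces $n_j$ to converge. Hence $(x_j)$ has a convergent subsequence, so $(S,K)\in\Phi_+$, and $K\notin\cS$ as desired.

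The decisive point, and the place where subprojectivity enters, is the construction of a complemented infinite dimensional subspace $N$ on which $K$ is bounded below: complementedness is exactly what makes the bounded splitting $x=n+p$ and the operator $S=TQ$ available, while the auxiliary operator $T$ is used only to transport the $P$-component into $Y$ in a proper (bounded-below modulo finite dimensions) way. I do not expect this to route cleanly through Theorem~\ref{equalities}(2), since subprojectivity only yields $X/N\simeq P$ for a complement $P$ of $N$, and there is no reason for such a $P$ to embed into $Y$ without invoking $T$; this is why the direct construction above, rather than a verification of the hypothesis of Theorem~\ref{equalities}(2), seems to be the natural path, and the verification that $(S,K)\in\Phi_+$ is the main technical step.
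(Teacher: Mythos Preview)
Your argument is correct, but your closing prediction is off: the paper does route the corollary through Theorem~\ref{equalities}(2). Given an infinite dimensional subspace of $X$, subprojectivity yields an infinite dimensional complemented $N$, so $X/N$ is isomorphic to the complement $P$; the paper then uses $\Phi_+(X,Y)\neq\emptyset$ to conclude that $P$, and hence $X/N$, embeds in $Y$ (any $T\in\Phi_+(X,Y)$ restricts to $T|_P\in\Phi_+(P,Y)$, and the finite dimensional kernel causes no trouble because $T(P)$ has infinite codimension in $Y$). So ``invoking $T$'' is precisely how one verifies the hypothesis of Theorem~\ref{equalities}(2), not a reason to bypass it. Your direct construction is essentially the proof of that theorem unrolled with the particular choice $U=T|_P$ under the identification $X/N\simeq P$: your $S=TQ$ plays the role of the paper's $S=UQ_N$, and your use of the sequential characterisation of $\Phi_+$ in place of the explicit norm estimate is what lets you work with a $\Phi_+$ operator $T$ directly rather than first extracting an honest isomorphism $U\colon X/N\to Y$. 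The payoff is a self-contained argument that sidesteps the finite dimensional bookkeeping, but the underlying idea is the same.
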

\begin{proof}
Every closed infinite dimensional subspace of $X$ contains an infinite dimensional subspace $N$  complemented subspace in $X$; thus $X/N$ is isomorphic to the complement of $N$. Since $\Phi_+(X,Y)\neq \emptyset$, the quotient $X/N$ is isomorphic to a subspace of $Y$ and we can apply Theorem \ref{equalities}.
\end{proof}

The next result is a refinement of Theorem \ref{equalities} that is  proved using the previous arguments.

\begin{Th}\label{adding+}
Suppose that $\Phi_+(X,Y)\neq \emptyset$, $Y\times Y$ embeds in $Y$ and every infinite dimensional subspace of $X$ has an infinite dimensional subspace $N$ such that $X/N$ embeds in $Y$. Then $\Ss(X,Y)=P\Phi_+(X,Y)$.
\end{Th}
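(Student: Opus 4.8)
The plan is to obtain the two equalities $\cS(X,Y)=P\Phi_+(X,Y)$ and $\Ss(X,Y)=\cS(X,Y)$ separately and then compose them, since by Proposition~\ref{inclusions} we always have the chain $\Ss(X,Y)\subset\cS(X,Y)\subset P\Phi_+(X,Y)$. The second equality is immediate: the hypothesis that every infinite dimensional subspace of $X$ contains an infinite dimensional subspace $N$ with $X/N$ embedding in $Y$ is exactly the assumption of part (2) of Theorem~\ref{equalities}, so that result gives $\Ss(X,Y)=\cS(X,Y)$ with no further work. Thus everything reduces to proving $P\Phi_+(X,Y)\subset\cS(X,Y)$ under the weaker assumption that $Y\times Y$ merely \emph{embeds} in $Y$ rather than being isomorphic to it.

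For this I would adapt the argument of Theorem~\ref{equalities}(1), but now let $U\colon Y\times Y\to Y$ be an isomorphism onto its closed image rather than a bijection, and write $U(y_1,y_2)=Vy_1+Wy_2$ with $V,W\in\Lc(Y)$ given by $Vy=U(y,0)$ and $Wy=U(0,y)$. Fix $K\in P\Phi_+(X,Y)$ and take any $S\in\Lc(X,Y)$ with $(S,K)\in\Phi_+$. Since $U$ is bounded below, composing on the left with $U$ preserves finite dimensionality of the kernel and closedness of the range, so $U(S,K)=VS+WK\in\Phi_+$. By Proposition~\ref{prod-PF+} we have $WK\in P\Phi_+(X,Y)$, and since $P\Phi_+(X,Y)$ is a subspace, also $-WK\in P\Phi_+(X,Y)$; hence $VS=U(S,K)-WK\in\Phi_+$ by the defining property of the perturbation class.

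The one genuinely new point, and the step I expect to be the main obstacle, is to pass from $VS\in\Phi_+$ back to $S\in\Phi_+$ now that $V$ is no longer invertible. Here I would use that $V=U(\cdot,0)$ is bounded below, because $U$ is and $y\mapsto(y,0)$ is isometric for $\|\cdot\|_1$, so $V$ is an isomorphism of $Y$ onto the closed subspace $V(Y)$. Then $N(S)=N(VS)$ is finite dimensional because $V$ is injective, and if $Sx_n\to z$ then $VSx_n\to Vz$ lies in the closed range $R(VS)$, forcing $Vz=VSx_0$ and hence $z=Sx_0\in R(S)$; equivalently, one may write $S=V^{-1}\circ(VS)$ with $V^{-1}\colon V(Y)\to Y$ an isomorphism. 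Either way $S\in\Phi_+$, so $K\in\cS(X,Y)$, which yields $P\Phi_+(X,Y)\subset\cS(X,Y)$ and therefore $\cS(X,Y)=P\Phi_+(X,Y)$. Combining this with $\Ss(X,Y)=\cS(X,Y)$ gives $\Ss(X,Y)=P\Phi_+(X,Y)$, as claimed.
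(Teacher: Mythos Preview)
Your argument is correct, but it proceeds differently from the paper. The paper does not factor through $\cS$ at all: it argues directly by contrapositive, taking $K\notin\Ss$, choosing $N$ inside a subspace on which $K$ is an isomorphism so that $X/N$ embeds in $Y$ via some $U$, and then using the embedding $Y\times Y\hookrightarrow Y$ to produce isomorphisms $V,W\in\Lc(Y)$ with $R(V)\cap R(W)=\{0\}$ and $R(V)+R(W)$ closed. It then sets $S=VUQ_N\notin\Phi_+$ and repeats the distance estimate from Theorem~\ref{equalities}(2) to show $S+WK\in\Phi_+$, whence $WK\notin P\Phi_+$ and, by Proposition~\ref{prod-PF+}, $K\notin P\Phi_+$.

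Your route is more modular and in fact yields a slightly sharper intermediate statement: you show that the conclusion of Theorem~\ref{equalities}(1), namely $\cS(X,Y)=P\Phi_+(X,Y)$, already holds under the weaker hypothesis that $Y\times Y$ merely embeds in $Y$, since the only property of $U$ used there is that it is bounded below (and hence so is $V$). You then invoke Theorem~\ref{equalities}(2) as a black box. The paper's direct approach, by contrast, makes the interaction of the two hypotheses explicit in a single construction and avoids passing through $\cS$; but it repeats the estimate from part~(2) rather than citing it. Both arguments ultimately rely on Proposition~\ref{prod-PF+} at the same point, to handle the factor $WK$.
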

\begin{proof}
Since $Y\times Y$ embeds in $Y$, there exist isomorphisms $V,W\in\Lc(Y)$ such that $R(V)\cap R(W)=\{0\}$ and $R(V)+R(W)$ is closed. Hence there exists $r>0$ such that $\|y_1+y_2\|\geq r(\|y_1\|+\|y_2\|)$ for $y_1\in R(V)$ and $y_2\in R(W)$, and clearly we can choose $V,W$ so that $r=1$.

Let $K\in\Lc(X,Y)$ with $K\notin \Ss$. Select an infinite dimensional subspace $M$ of $X$ such that $KJ_M$ is an isomorphism, and let $N$ be an infinite dimensional subspace of $M$ such that there exists an isomorphism $U:X/N\to Y$. We can assume that $\|Uz\|\geq z$ for each $z\in X/N$.

The operator $S=VUQ_N \notin\Phi_+$, and proceeding like in the proof of (2) in Theorem \ref{equalities}, we can show that $S+WK\in\Phi_+$. Then $WK \notin P\Phi_+$, hence $K \notin P\Phi_+$, by Proposition \ref{prod-PF+}.
\end{proof}

\begin{Cor}
If $X$ is separable, $Y\times Y$ embeds in $Y$, and $Y$ contains a copy of $C[0,1]$ then $\Ss(X,Y)=\Phi_+(X,Y)$.
\end{Cor}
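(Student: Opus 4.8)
The plan is to obtain this corollary as an immediate application of Theorem \ref{adding+}, so the entire task reduces to verifying that its three hypotheses hold for the given spaces $X$ and $Y$. The crucial external input is the classical Banach--Mazur theorem: every separable Banach space embeds isometrically into $C[0,1]$. Since $Y$ contains a copy of $C[0,1]$ by assumption, it follows that \emph{every} separable Banach space embeds in $Y$; this single observation will supply two of the three hypotheses.

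First I would check that $\Phi_+(X,Y)\neq\emptyset$. As $X$ is separable it embeds in $C[0,1]$, and composing this embedding with the embedding of $C[0,1]$ into $Y$ yields an isomorphic embedding $j\colon X\to Y$. Such a $j$ has closed range and trivial kernel, hence $j\in\Phi_+(X,Y)$ and this class is nonempty. The second hypothesis, that $Y\times Y$ embeds in $Y$, is assumed outright, so there is nothing to prove there.

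It then remains to verify the quotient condition: every infinite dimensional (closed) subspace of $X$ should contain an infinite dimensional subspace $N$ with $X/N$ embedding in $Y$. Here separability does all the work. Given an infinite dimensional closed subspace $M$ of $X$, take $N=M$ (or any infinite dimensional closed subspace of $M$). Since $X$ is separable, the quotient $X/N$ is separable, hence embeds in $C[0,1]$ and therefore in $Y$. With all three hypotheses of Theorem \ref{adding+} in hand, its conclusion---the asserted equality---follows at once.

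I do not expect any genuine obstacle. The only points requiring a moment of care are the appeal to the Banach--Mazur embedding theorem and the elementary observation that an isomorphic embedding is upper semi-Fredholm; beyond these, every hypothesis of Theorem \ref{adding+} is either assumed directly or immediate from the separability of $X$.
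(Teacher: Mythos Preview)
Your proposal is correct and follows exactly the paper's approach: the paper's one-line proof simply invokes the Banach--Mazur theorem (``$C[0,1]$ contains a copy of each separable Banach space''), and you have spelled out in detail how this fact feeds into each hypothesis of Theorem~\ref{adding+}. Your explicit verification that $\Phi_+(X,Y)\neq\emptyset$ is a welcome addition, since the paper leaves this implicit.
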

\begin{proof}
It is well known that the space $C[0,1]$ contains a copy of each separable Banach space.
\end{proof}

The class $\Phi \mathcal{S}$ is injective in the following sense: 

\begin{Prop}  Given an operator $K \in \Lc(X, Y)$ and an (into) isomorphism $L\in \Lc(Y,Y_0)$, if $LK\in  \Phi\mathcal{S}(X,Y_0)$ then $K\in \Phi \mathcal{S}(X,Y)$. 
\end{Prop}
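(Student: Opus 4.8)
The plan is to prove that $\cS$ is an injective class by pushing the defining condition of $\cS(X,Y)$ through $L$ and through its ``doubling''. The only tool needed is the elementary observation that composition with an into isomorphism neither creates nor destroys the upper semi-Fredholm property: if $J\in\Lc(Z,Z_0)$ is an into isomorphism and $T\in\Lc(X,Z)$, then $T\in\Phi_+(X,Z)$ if and only if $JT\in\Phi_+(X,Z_0)$. Indeed, since $J$ is injective we have $N(JT)=N(T)$, so one kernel is finite dimensional exactly when the other is; and since $J$ is an isomorphism onto its (closed) range, $R(T)$ is closed in $Z$ if and only if $R(JT)=J(R(T))$ is closed in $Z_0$. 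I will use this equivalence in both directions.

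Now fix $K$ and $L$ as in the statement (so that $\Phi_+(X,Y)\neq\emptyset$ and $\Phi_+(X,Y_0)\neq\emptyset$, which is what makes the two classes $\cS(X,Y)$ and $\cS(X,Y_0)$ meaningful). To show $K\in\cS(X,Y)$, take any $S\in\Lc(X,Y)$ with $(S,K)\in\Phi_+(X,Y\times Y)$; I must deduce $S\in\Phi_+(X,Y)$. Consider the map $\tilde L\colon Y\times Y\to Y_0\times Y_0$ given by $\tilde L(y_1,y_2)=(Ly_1,Ly_2)$. With the product norm $\|\cdot\|_1$ one checks immediately that $\tilde L$ is again an into isomorphism (indeed $\|\tilde L(y_1,y_2)\|_1=\|Ly_1\|+\|Ly_2\|\geq c\|(y_1,y_2)\|_1$, with the same lower bound $c$ as $L$), and that $\tilde L\,(S,K)=(LS,LK)$. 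By the forward direction of the observation, $(LS,LK)=\tilde L(S,K)\in\Phi_+(X,Y_0\times Y_0)$.

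Since $LK\in\cS(X,Y_0)$ by hypothesis and $(LS,LK)\in\Phi_+$, the definition of $\cS(X,Y_0)$ applied to the operator $S':=LS\in\Lc(X,Y_0)$ yields $LS\in\Phi_+(X,Y_0)$. Finally, applying the reverse direction of the observation to $J=L$ and $T=S$ gives $S\in\Phi_+(X,Y)$, which is exactly what was required. Hence $K\in\cS(X,Y)$.

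I do not expect any real obstacle here: the content is entirely in the two-sided equivalence ``$T\in\Phi_+\iff JT\in\Phi_+$ for into isomorphisms $J$'', together with the bookkeeping identity $\tilde L(S,K)=(LS,LK)$ and the fact that doubling an into isomorphism with respect to $\|\cdot\|_1$ preserves the isomorphism bound. The only point deserving a line of care is that the statement presupposes $\Phi_+(X,Y)\neq\emptyset$, so that $\cS(X,Y)$ is defined at all; under that standing assumption the argument above is the whole proof.
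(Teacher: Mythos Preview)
Your proof is correct and follows exactly the same approach as the paper: double $L$ to an into isomorphism $\tilde L=L\times L$ on $Y\times Y$, use that $(L\times L)(S,K)=(LS,LK)\in\Phi_+$, apply the hypothesis $LK\in\cS(X,Y_0)$ to get $LS\in\Phi_+$, and descend back to $S\in\Phi_+$. Your write-up is simply more explicit about the two-sided preservation of $\Phi_+$ under left composition with an into isomorphism, which the paper's proof uses implicitly.
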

\begin{proof}
Let $K \in \Lc(X, Y)$ and let $L \in \Lc(X,Y_0)$ be an isomorphism into $Y_0$ such that $LK \in \Phi \mathcal{S}(X,Y_0)$. Take  $S\in \Lc(X,Y)$ and suppose that $(S,K) \in \Phi_+(X, Y \times Y)$. Then $(LS,LK)=(L\times L)(S,K)\in \Phi_+(X, Y_0 \times Y_0)$, where $(L\times L)\in \Lc(X\times X,Y_0 \times Y_0)$ is defined by $(L\times L)(x_1,x_2)=(Lx_1,Lx_2)$.

Since $LK \in \Phi \mathcal{S}(X,Y_0)$ we obtain $LS \in \Phi_+(X, Y_0)$. Therefore $S \in \Phi_+(X, Y)$, hence $K \in \Phi \mathcal{S}(X,Y)$.
\end{proof}

\section{The perturbation class for $\Phi_-$}

Given two operators $S,T\in \Lc(X,Y)$, we denote by $[S,T]$ the operator from $X\times X$ into $Y$ defined by $[S,T](x_1,x_2) =Sx_1+Tx_2$, where $X\times X$ is endowed with the maximum norm $\|(x_1,x_2)\|_\infty= \max\{\|y_1\|, \|y_2\|\}$.

\begin{Def}
Suppose that $\Phi_-(X,Y) \neq\emptyset$ and let $K\in\Lc(X,Y)$. We say that $K$ is \emph{$\Phi$-cosingular,} and write $K\in \cC$, when for each $S\in \Lc(X,Y)$, $[S,K]\in \Phi_-$ implies $S\in\Phi_-$.
\end{Def}

Like in the case of $\cS$, the definition of $\cC(X,Y)$ is similar to that of $P\Phi_-(X,Y)$, but the former one is easier to handle because the action of $S$ and $K$ is decoupled when we consider $[S,K]$ instead of $S+K$.

\begin{Prop}\label{inclusionsC}\emph{\cite[Proposition 2.5]{AienaGM-A:12}}
Suppose that $\Phi_-(X,Y)\neq\emptyset$. Then
$$
\SC(X,Y)\subset \cC(X,Y)\subset P\Phi_-(X,Y).
$$
\end{Prop}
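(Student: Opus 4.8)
The statement contains two inclusions, and the plan is to treat them separately, exploiting that they are the $\Phi_-$-analogues of the two inclusions in Proposition \ref{inclusions} with the roles of subspaces of the domain and quotients of the codomain interchanged.

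For the right-hand inclusion $\cC(X,Y)\subset P\Phi_-(X,Y)$ I would argue directly, without even using $K\in\SC$. Let $K\in\cC(X,Y)$ and fix $T\in\Phi_-(X,Y)$; the goal is $T+K\in\Phi_-$. The device is to realize $T+K$ as the first slot forced by an operator of the form $[S,K]$. Consider the linear isomorphism $\psi\colon X\times X\to X\times X$ given by $\psi(x_1,x_2)=(x_1,x_1+x_2)$, which is bounded with bounded inverse $(x_1,x_2)\mapsto(x_1,x_2-x_1)$. A direct computation gives $[T,K]\circ\psi=[T+K,K]$, so $R([T+K,K])=R([T,K])\supseteq R(T)$. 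Since $T\in\Phi_-$, the range $R(T)$ is closed and of finite codimension, hence so is the larger subspace $R([T,K])$; thus $[T+K,K]=[T,K]\psi\in\Phi_-$. Applying the definition of $\cC$ with $S:=T+K$ then yields $T+K\in\Phi_-$, as required. (This mirrors the proof of $\cS\subset P\Phi_+$, where one instead composes $(T,K)$ with the codomain isomorphism $(y_1,y_2)\mapsto(y_1+y_2,y_2)$.)

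For the left-hand inclusion $\SC(X,Y)\subset\cC(X,Y)$ I would dualize the argument proving $\Ss\subset\cS$. Let $K\in\SC$, suppose $[S,K]\in\Phi_-$, and assume for contradiction that $S\notin\Phi_-$. The dual of the fact ``$S\notin\Phi_+$ iff $S$ has arbitrarily small restrictions to infinite-dimensional subspaces'' is that $S\notin\Phi_-$ iff, for every $\e>0$, there is an infinite-codimensional closed subspace $N\subseteq Y$ with $\|Q_N S\|\le\e$. Granting this, fix such an $N$ for a small $\e$ to be chosen. Since $[S,K]\in\Phi_-$, the corestriction of $[S,K]$ onto its range $R:=R(S)+R(K)$ is an open surjection, so $Q_N[S,K]\colon X\times X\to Q_N(R)$ is also a surjection onto the finite-codimensional subspace $Q_N(R)$ of $Y/N$. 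Writing $Q_N[S,K](x_1,x_2)=Q_NSx_1+Q_NKx_2$ and using $\|Q_NS\|\le\e$, every element of $Q_N(R)$ is approximated by $R(Q_NK)$ with relative error controlled by $\e$; for $\e$ small a Neumann-series iteration then forces $Q_NK$ to map onto $Q_N(R)$. Finally, using the dual characterization of strict cosingularity I would enlarge $N$ to an infinite-codimensional closed subspace $N'\supseteq N$ for which $Q_{N'}K$ becomes surjective onto $Y/N'$, contradicting $K\in\SC$.

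The main obstacle lies entirely in this last inclusion, and specifically in the quotient characterization of $\Phi_-$ together with the uniform control of constants. Because $\Phi_-$ and $\SC$ are defined through surjectivity rather than boundedness below, the estimates rest on the open mapping theorem and surjection moduli rather than on simple norm inequalities for individual vectors, and the surjection modulus of $Q_N[S,K]$ depends on $N$, so the choice of $\e$ and the iteration must be organized so as not to be circular. In particular one must rule out the awkward case in which $R(S)$ is dense but not closed, where $\overline{R(S)}$ provides no usable infinite-codimensional subspace and only the $\e$-version of the characterization applies. Care is also needed because the correspondence $\SC\leftrightarrow\Ss$ under adjoints is not an equivalence for general Banach spaces, so the whole argument should be carried out directly on $X$ and $Y$ rather than by passing to $S^{*}$ and $K^{*}$.
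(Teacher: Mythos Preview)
The paper does not prove this proposition; it merely quotes it from \cite[Proposition 2.5]{AienaGM-A:12}, so there is no in-paper argument to compare against. I can therefore only comment on your proposal on its own merits.

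Your argument for $\cC(X,Y)\subset P\Phi_-(X,Y)$ is correct and clean: once $T\in\Phi_-$, the range $R([T,K])=R(T)+R(K)$ contains the closed finite-codimensional subspace $R(T)$, hence is itself closed and finite-codimensional; the isomorphism $\psi$ then transfers this to $[T+K,K]$, and the defining property of $\cC$ yields $T+K\in\Phi_-$.

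For $\SC(X,Y)\subset\cC(X,Y)$, however, your route is needlessly delicate and rests on a quotient characterisation of $\Phi_-$ (``$S\notin\Phi_-$ iff for every $\e>0$ there is an infinite-codimensional closed $N$ with $\|Q_NS\|\le\e$'') that is not a standard fact and is genuinely awkward to justify in the case you yourself flag, where $R(S)$ is dense but non-closed; the Neumann-type iteration you sketch also has the circularity problem you mention, since the openness constant of $Q_N[S,K]$ depends on $N$. There is a much shorter path that avoids all of this: use the classical inclusion $\SC\subset P\Phi_-$ directly. If $K\in\SC(X,Y)$ then $[0,K]\in\SC(X\times X,Y)$, because $Q_N[0,K]$ surjective forces $Q_NK$ surjective. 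Hence $[0,K]\in P\Phi_-(X\times X,Y)$, and from $[S,K]\in\Phi_-$ one gets $[S,0]=[S,K]-[0,K]\in\Phi_-$, i.e.\ $R(S)$ is closed and finite-codimensional, so $S\in\Phi_-$. This is presumably close to what \cite{AienaGM-A:12} does, and it sidesteps every obstacle you list.
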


Note that $\SC$ is an operator ideal but $P\Phi_-$ is not; $P\Phi_-(X,Y)$ is a closed subspace of $\Lc(X,Y)$, and $P\Phi_-(X)$ is an ideal of $\Lc(X)$.

\begin{Prop}
$\Phi \mathcal{C}(X,Y)$ is  closed in $\Lc(X,Y)$
\end{Prop}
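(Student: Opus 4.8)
The plan is to mimic exactly the argument given above for the closedness of $\cS(X,Y)$, replacing the coupling $(S,T)$ by the coupling $[S,T]$ and the class $\Phi_+$ by $\Phi_-$. First I would take a sequence $\{T_n\}$ in $\cC(X,Y)$ converging in operator norm to some $T\in\Lc(X,Y)$, fix an arbitrary $S\in\Lc(X,Y)$ with $[S,T]\in\Phi_-(X\times X,Y)$, and aim to deduce $S\in\Phi_-(X,Y)$. Establishing this for every such $S$ is precisely what is needed to conclude $T\in\cC(X,Y)$, and hence that $\cC(X,Y)$ is closed.

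The key observation is that the assignment $R\mapsto[S,R]$ from $\Lc(X,Y)$ into $\Lc(X\times X,Y)$ is norm-nonincreasing. Indeed, $[S,T_n]-[S,T]=[0,T_n-T]$, and for any $(x_1,x_2)$ with $\|(x_1,x_2)\|_\infty=\max\{\|x_1\|,\|x_2\|\}\le 1$ we have $\|[0,T_n-T](x_1,x_2)\|=\|(T_n-T)x_2\|\le\|T_n-T\|$, so $\|[S,T_n]-[S,T]\|\le\|T_n-T\|\to 0$. Thus $[S,T_n]\to[S,T]$ in $\Lc(X\times X,Y)$; this is the exact analogue of the identity $\|(S,T_n)-(S,T)\|=\|T_n-T\|$ used in the $\cS$ case, now relying on the maximum norm on $X\times X$ making the second-coordinate embedding of norm one.

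Since $\Phi_-(X\times X,Y)$ is open in $\Lc(X\times X,Y)$ and $[S,T]$ lies in it, there exists a positive integer $N$ with $[S,T_N]\in\Phi_-(X\times X,Y)$. Because $T_N\in\cC(X,Y)$, the defining property of $\cC$ forces $S\in\Phi_-(X,Y)$. As $S$ was an arbitrary operator with $[S,T]\in\Phi_-$, we conclude $T\in\cC(X,Y)$.

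I do not expect any genuine obstacle here, since the proof is a direct transcription of the $\cS$ argument. The only point requiring a little care is the norm computation showing $[S,T_n]\to[S,T]$, which depends on the choice of the maximum norm on $X\times X$; once this convergence is in place, the openness of $\Phi_-$ and the defining property of $\cC$ finish the argument immediately.
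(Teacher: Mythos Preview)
Your proposal is correct and follows essentially the same approach as the paper's own proof: both take a convergent sequence $\{T_n\}$ in $\cC(X,Y)$, observe that $[S,T_n]\to[S,T]$, invoke the openness of $\Phi_-(X\times X,Y)$ to get $[S,T_N]\in\Phi_-$ for some $N$, and then apply the defining property of $\cC$ to $T_N$. If anything, you supply more detail than the paper does on the norm estimate $\|[S,T_n]-[S,T]\|\le\|T_n-T\|$, which the paper simply asserts.
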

\begin{proof} 
Let $\{T_n\}$ be a sequence in $\Phi \mathcal{C}(X,Y)$ converging to $T\in \Lc(X,Y)$. Suppose that $S\in \Lc(X,Y)$ and $[S,T] \in \Phi_-(X\times X,Y)$.
Note that the sequence $[S,T_n]$ converge to $[S,T]$.

Since $\Phi_-(X\times X,Y)$ is an open set there exists a positive integer $N$ such that $[S,T_N] \in \Phi_-(X \times X, Y)$. Hence  $T_N \in \Phi \mathcal{C}(X,Y)$ implies  $S \in \Phi_-(X,Y)$.
\end{proof}

\begin{Question}
Suppose that $\Phi_-(X,Y)\neq \emptyset$.
\begin{itemize}
\item[(a)] Is $\cC(X,Y)$ a  subspace of $\Lc(X,Y)$?
\item[(c)] Is $\cC$ an operator ideal?
\item[(c)] Is Proposition \ref{prod-PF+} valid for $\cC$?
\end{itemize}
\end{Question}

Answering a question in \cite{Friedman:02}, an example of an operator $K\in P\Phi_-\setminus \cC$ was given in \cite{AienaGM-A:12}, but we do not know if the other inclusion can be strict.

\begin{Question}\label{SS=cC}
Suppose that $\Phi_-(X,Y)\neq \emptyset$. Is $\SC(X,Y)=\cC(X,Y)$?
\end{Question}

A negative answer to Question \ref{SS=cC} would provide a new counterexample to the perturbation classes problem for $\Phi_-$. 

Next we will show that the inclusions in Proposition \ref{inclusionsC} become equalities in some cases.
\begin{Th}\label{equalities-}
Suppose that the spaces $X$ and $Y$ satisfy $\Phi _-(X,Y) \neq \emptyset$.
\begin{enumerate}
\item If $X\times X \simeq X $ then $\Phi \mathcal{C}(X,Y)=P \Phi_-(X, Y)$.
\item If every infinite codimensional closed subspace of $Y$ is contained in an infinite codimensional closed subspace $N$ which is isomorphic to a quotient of $X$ then $\Phi \mathcal{C}(X,Y)= \SC(X, Y)$.
\end{enumerate}
\end{Th}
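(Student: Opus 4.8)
The plan is to dualise the proof of Theorem \ref{equalities} for $\Phi_-$, replacing the pairing $(\cdot,\cdot)$ into $Y\times Y$ by the pairing $[\cdot,\cdot]$ out of $X\times X$, so that the relevant isomorphism now acts on the \emph{domain} side. Since $\SC(X,Y)\subset\cC(X,Y)\subset P\Phi_-(X,Y)$ always holds by Proposition \ref{inclusionsC}, for (1) it suffices to prove $P\Phi_-(X,Y)\subset\cC(X,Y)$, and for (2) it suffices to prove $\cC(X,Y)\subset\SC(X,Y)$, which I would establish by contraposition.

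For (1), I would fix a bijective isomorphism $U\colon X\to X\times X$ and write $Ux=(Vx,Wx)$ with $V,W\in\Lc(X)$ its coordinate components, so that the key identity is $[S,K]U=SV+KW$. Given $K\in P\Phi_-(X,Y)$ and $S\in\Lc(X,Y)$ with $[S,K]\in\Phi_-$, the surjectivity of $U$ gives $R([S,K]U)=R([S,K])$, whence $SV+KW=[S,K]U\in\Phi_-$. Proposition \ref{prod-PF+} yields $KW\in P\Phi_-(X,Y)$, and since $P\Phi_-(X,Y)$ is a subspace, subtracting $KW$ forces $SV\in\Phi_-$. Finally $R(SV)\subset R(S)$ makes $R(S)$ finite codimensional, so $S\in\Phi_-$ and $K\in\cC(X,Y)$. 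This is a line-by-line transcription of the proof of Theorem \ref{equalities}(1), the only new point being that the isomorphism is composed on the right.

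For (2) I would argue by contraposition: take $K\notin\SC(X,Y)$, so there is an infinite codimensional closed subspace $N'$ of $Y$ with $Q_{N'}K$ surjective. By hypothesis $N'$ lies inside an infinite codimensional closed subspace $N$ isomorphic to a quotient $X/M$ of $X$; composing $Q_{N'}K$ with the natural surjection $Y/N'\to Y/N$ shows $Q_NK$ is still surjective, i.e.\ $R(K)+N=Y$. Taking an onto isomorphism $\Psi\colon X/M\to N$ and setting $S=J_N\Psi Q_M\in\Lc(X,Y)$ gives $R(S)=N$, so $S\notin\Phi_-$ because $N$ is infinite codimensional, while $R([S,K])=R(S)+R(K)=N+R(K)=Y$, so $[S,K]$ is surjective and in particular $[S,K]\in\Phi_-$. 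This exhibits $S\notin\Phi_-$ with $[S,K]\in\Phi_-$, hence $K\notin\cC(X,Y)$.

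The step I expect to need the most care is the construction of $S$ in (2): one must enlarge $N'$ to a subspace $N$ realised as a quotient of $X$ and verify that surjectivity of $Q_NK$ survives the enlargement. Pleasantly, the dual argument is \emph{simpler} than its $\Phi_+$ counterpart, since where the proof of Theorem \ref{equalities}(2) needed a norm estimate to show that $(S,K)$ is bounded below, here the target conclusion is surjectivity of $[S,K]$, which drops out at once from $R(S)+R(K)=Y$ with no metric estimate. The main conceptual obstacle is thus simply the duality bookkeeping: products on the domain rather than the range, ranges in place of kernels, and finite codimension in place of finite dimension.
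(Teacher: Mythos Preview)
Your proof is correct and follows the same route as the paper's: for (1) you compose $[S,K]$ on the right with the isomorphism $U\colon X\to X\times X$ and invoke Proposition~\ref{prod-PF+}, and for (2) you argue by contraposition, building $S$ with $R(S)=N$ so that $[S,K]$ is surjective. Your write-up is in fact a bit more explicit than the paper's (you spell out why $Q_NK$ remains surjective after enlarging $N'$ to $N$, and why $SV\in\Phi_-$ forces $S\in\Phi_-$), and your observation that the $\Phi_-$ argument needs no norm estimate, unlike Theorem~\ref{equalities}(2), is exactly right.
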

\begin{proof}
(1) Let $U:X \rightarrow X \times X$ be a bijective isomorphism and let $U_1,U_2 \in \Lc(X)$ such that $U(x)=(U_1x,U_2x)$.
If $K \in P\Phi_-(X, Y)$, for each $S\in \Lc(X,Y)$ such that $[S,K] \in \Phi_-(X \times X, Y)$ we have $[S,K]U=SU_1+KU_2 \in \Phi_-(X, Y \times Y)$. By Proposition \ref{prod-PF+},  $KU_2 \in P\Phi_-(X, Y)$. Then $SU_1 \in \Phi_-(X, Y)$, hence $S \in \Phi_-(X, Y)$. Thus we conclude that $K \in \Phi \mathcal{C}(X,Y)$.
\medskip

(2) Let $K\in \Lc(X,Y)$ $K\notin \SC$. Then there exists an infinite codimensional closed subspace $M$ of $Y$ such that $Q_NK$ is surjective. By the hypothesis, there exist an infinite codimensional closed subspace $N$ such that $M\subset N$ and a surjective operator $V\in\Lc(X,M)$. 

Observe that $S=J_NV\in\Lc(X,Y)$ is not in $\Phi_-$. We prove that $K\notin \Phi \mathcal{C}$ by showing that $[S,K] \in \Phi_-(X\times X, Y)$. 

Indeed, note that $R(S)=N$. Moreover $Q_NK$ surjective implies $R(K)+N=Y$. Since $R([S,K])= R(S)+ R(K)$, $[S,K]$ is surjective.
\end{proof}

In the known examples in which $\SC(X,Y)\neq P\Phi_-(X,Y)$ in \cite {Gonzalez:03,GimenezGM-A:12}, the space $X$ has a complemented subspace which is  hereditarily indecomposable. So the question arises.

\begin{Question}
Suppose that $\Phi_-(X,Y)\neq \emptyset$ and $X\times X\simeq X$. 

Is $\SC(X,Y)=P\Phi_-(X,Y)$.
\end{Question}

A Banach space $X$ is \emph{superprojective} if each of its infinite codimensional closed 
subspaces is contained in some complemented infinite codimensional  subspace. The spaces $c_0$, $\ell_p$ ($1< p<\infty)$ and $L_q(\mu)$ ($1< q \leq 2)$ are superprojective. See \cite{GalegoGP17,GP16} for further examples.

\begin{Cor} Suppose
 that $\Phi _-(X,Y) \neq \emptyset$ and the space $Y$ is superprojective. Then $\Phi \mathcal{C}(X,Y)= \SC(X, Y)$.
\end{Cor}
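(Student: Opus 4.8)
The plan is to derive the corollary from Theorem \ref{equalities-}(2): it suffices to show that superprojectivity of $Y$, together with $\Phi_-(X,Y)\neq\emptyset$, guarantees that every infinite codimensional closed subspace of $Y$ is contained in an infinite codimensional closed subspace that is isomorphic to a quotient of $X$. So I would start from an arbitrary infinite codimensional closed subspace $M\subseteq Y$ and, using that $Y$ is superprojective, enclose it in a complemented infinite codimensional subspace $N_0\supseteq M$. Writing $Y=N_0\oplus P$, the complement $P$ is infinite dimensional precisely because $N_0$ is infinite codimensional; let $\pi\colon Y\to N_0$ be the projection onto $N_0$ along $P$. The whole problem then reduces to proving that this particular $N_0$ is isomorphic to a quotient of $X$, after which one takes $N=N_0$ in Theorem \ref{equalities-}(2).

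Next I would fix some $T\in\Phi_-(X,Y)$, which exists by hypothesis, and consider $\pi T\colon X\to N_0$. Since $\pi$ maps $Y$ onto $N_0$ and $R(T)$ has finite codimension in $Y$, the range $R(\pi T)=\pi(R(T))$ has finite codimension $d$ in $N_0$; indeed the natural surjection $Y/R(T)\twoheadrightarrow N_0/R(\pi T)$ shows $d\le\mathrm{codim}_Y R(T)<\infty$. The decisive observation is that the kernel $N(\pi T)=T^{-1}(P)$ is infinite dimensional: as $R(T)$ is finite codimensional in $Y$ and $P$ is infinite dimensional, $R(T)\cap P$ has finite codimension in $P$ and is therefore infinite dimensional, and being contained in $R(T)$ its $T$-preimage is infinite dimensional as well.

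With this in hand I would absorb the finite defect. Choose a finite dimensional $G\subseteq N_0$ with $R(\pi T)\oplus G=N_0$ (possible since $\dim G=d<\infty$), and a finite rank operator $b\colon X\to G$ whose restriction to $N(\pi T)$ is onto $G$ (possible since $N(\pi T)$ is infinite dimensional). Then $\Psi:=\pi T+b\colon X\to N_0$ is surjective: its range contains $G=b(N(\pi T))$, and for each $r\in R(\pi T)$, picking $x_r$ with $\pi Tx_r=r$ gives $r=\Psi(x_r)-b(x_r)\in R(\Psi)$, so $R(\Psi)\supseteq R(\pi T)+G=N_0$. By the open mapping theorem $N_0\simeq X/N(\Psi)$ is a quotient of $X$. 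Since $N_0\supseteq M$ is infinite codimensional and closed, the hypothesis of Theorem \ref{equalities-}(2) is verified, and that theorem yields $\cC(X,Y)=\SC(X,Y)$.

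The main obstacle is exactly this last upgrade: the operator $T$ only exhibits $N_0$ as a quotient of $X$ up to a finite dimensional defect $G$, and for a pathological $Y$ one cannot simply invoke $R(\pi T)\simeq N_0$. What makes the argument go through is the infinite dimensionality of $N(\pi T)$, which lets a finite rank perturbation supply the missing $G$ while staying inside $N_0$; the remaining verifications (finite codimension of $R(\pi T)$, surjectivity of $\Psi$, and the final application of the open mapping theorem) are routine.
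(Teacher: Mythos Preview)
Your proof is correct and follows essentially the same route as the paper: use superprojectivity to enclose $M$ in a complemented infinite codimensional subspace $N$, use a projection together with an operator in $\Phi_-(X,Y)$ to exhibit $N$ as a quotient of $X$, and then invoke Theorem \ref{equalities-}(2). The paper's proof is terser and simply asserts the existence of $T\in\Lc(X,Y)$ with $R(T)\supset N$ (so that $PT$ is onto $N$), whereas you carefully justify this by showing $N(\pi T)$ is infinite dimensional and absorbing the finite defect with a finite rank perturbation; your extra care is warranted, since the paper's assertion is not entirely immediate.
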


\begin{proof}
Every closed infinite codimensional subspace of $Y$ is contained in an infinite codimensional complemented subspace $N$. Since $\Phi _-(X,Y) \neq \emptyset$, there exists $T\in \Lc(X,Y)$ with $R(T)\supset N$, and composing with the projection $P$ on $Y$ onto $N$ we get $R(PT)=N$, and we can apply Theorem \ref{equalities-}.
\end{proof}

The following result is a refinement of the previous results in this section. 

\begin{Th}\label{adding-}
Suppose that $\Phi _-(X,Y) \neq \emptyset$, there exists a surjection from $X \times X$ onto $X$, and every closed infinite codimensional subspace of $Y$ is contained in a closed  infinite codimensional subspace $N$ which is isomorphic to a quotient of $X$. Then $P\Phi_-(X,Y)= \SC(X,Y)$.
\end{Th}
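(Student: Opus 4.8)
\noindent\emph{Proof proposal.}
The plan is to establish the one missing inclusion $P\Phi_-(X,Y)\subseteq\SC(X,Y)$, since the reverse inclusion is Proposition \ref{inclusionsC}. Following the pattern of Theorem \ref{adding+}, I would argue by contraposition: starting from $K\notin\SC$ I will manufacture a single operator $G\in\Phi_-(X,Y)$ of the form $G=S+KB$ with $S\notin\Phi_-$ and $B\in\Lc(X)$, and then peel off $KB$ using that $P\Phi_-(X,Y)$ is a closed subspace.

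First I would assemble the two ingredients, dual to the ones in Theorem \ref{adding+}. From $K\notin\SC$ choose an infinite codimensional closed subspace $M\subseteq Y$ with $Q_MK$ surjective, i.e. $R(K)+M=Y$; the quotient hypothesis then yields an infinite codimensional closed $N\supseteq M$ together with a surjection $V\colon X\to N$, so that $S_0:=J_NV$ has $R(S_0)=N$ and hence $R(S_0)+R(K)\supseteq M+R(K)=Y$. Proceeding as in the proof of Theorem \ref{equalities-}(2), this shows $[S_0,K]\colon X\times X\to Y$ is surjective, so $[S_0,K]\in\Phi_-$. From the surjectivity hypothesis I extract operators $A,B\in\Lc(X)$ such that the pairing $(A,B)\colon X\to X\times X$, $x\mapsto(Ax,Bx)$, is surjective (this is the datum actually needed, and it relaxes the bijection $X\simeq X\times X$ used in Theorem \ref{equalities-}(1); it plays here the role that the isomorphisms $V,W$ played in Theorem \ref{adding+}).

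The key step is to form $G:=S_0A+KB=[S_0,K]\circ(A,B)$. Being a composition of the two surjections $(A,B)$ and $[S_0,K]$, the operator $G$ is surjective, hence $G\in\Phi_-(X,Y)$. On the other hand $S:=S_0A$ has range contained in $N$, an infinite codimensional subspace, so $S\notin\Phi_-$. Now suppose $KB\in P\Phi_-(X,Y)$. Since this class is a closed subspace, $-KB\in P\Phi_-$, and adding it to $G\in\Phi_-$ gives $S=G-KB\in\Phi_-$, a contradiction. Hence $KB\notin P\Phi_-(X,Y)$, and Proposition \ref{prod-PF+} (in contrapositive form, $K\in P\Phi_-\Rightarrow KB\in P\Phi_-$) forces $K\notin P\Phi_-(X,Y)$. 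Thus $P\Phi_-(X,Y)\subseteq\SC(X,Y)$, which together with Proposition \ref{inclusionsC} yields equality.

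I expect the only genuinely delicate point to be the correct use of the hypothesis: the surjectivity of $G$ hinges on having two \emph{independent} input channels for $S_0$ and $K$, which is exactly what the pairing $(A,B)$ onto $X\times X$ supplies, so the doubling must be read on the domain side, dual to the codomain doubling $Y\times Y\hookrightarrow Y$ of Theorem \ref{adding+}. The remaining verifications—that $S_0A$ has range inside $N$, and that the subspace/perturbation bookkeeping closes up—are routine.
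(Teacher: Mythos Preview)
Your proposal is correct and follows essentially the same route as the paper: pick $K\notin\SC$, build $S_0=J_NV$ with $[S_0,K]$ surjective, compose with a surjection $X\to X\times X$ to obtain $S_0A+KB\in\Phi_-$ with $S_0A\notin\Phi_-$, and invoke Proposition~\ref{prod-PF+} to conclude $K\notin P\Phi_-$. Your treatment of the enlargement $M\subseteq N$ is in fact more explicit than the paper's ``we can assume'' step, but the arguments are otherwise the same.
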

\begin{proof}
$K\in \Lc(X,Y)$, $K\notin \SC$. Then there exists an infinite codimensional subspace $N$ of $Y$ such that $Q_NK$ is surjective. By hypothesis, we can assume that there exists a surjective operator $U:X\to N$. Then $S=J_NU\in \Lc(X,Y)$ is not in $\Phi_-$. Moreover $[S,K]$ is surjective: $R([S,K])= R(S)+R(K)= N+R(K)=Y$, hence $[S,K]\in \Phi_-(X\times X,Y)$.

Let $V:X \rightarrow X \times X$ be a surjection and let $V_1,V_2 \in \Lc(X)$ such that $V(x)=(V_1x,V_2x)$ for each $x\in X$.
Then $[S,K]V= SV_1+KV_2\in \Phi_-(X,Y)$. Since $SV_1\notin \Phi_-$ we get $KV_2\notin P\Phi_-$; hence $K\notin P\Phi_-$ by Proposition \ref{prod-PF+}. 
\end{proof}

\begin{Cor}
If $Y$ is separable, there exists a surjection from $X\times X$ onto $X$, and $X$ has aquotient isomorphic to $\ell_1$ then $\SC(X,Y)=\Phi_-(X,Y)$.
\end{Cor}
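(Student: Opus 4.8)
The plan is to obtain the result as a direct application of Theorem \ref{adding-}, so the task reduces to verifying its three hypotheses. The surjection from $X\times X$ onto $X$ is assumed, so the real content is to produce, from the separability of $Y$ and the existence of an $\ell_1$-quotient of $X$, both the nonemptiness of $\Phi_-(X,Y)$ and the subspace condition. The single fact driving everything is the universal quotient property of $\ell_1$: every separable Banach space is isomorphic to a quotient of $\ell_1$ (choose a sequence dense in the unit ball of the target and define a norm-one surjection from $\ell_1$ by a successive-approximation argument). Since $X$ has a quotient isomorphic to $\ell_1$, there is a surjection from $X$ onto $\ell_1$; composing it with a surjection from $\ell_1$ onto any separable $Z$ shows that every separable Banach space is isomorphic to a quotient of $X$.

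First I would record that $\Phi_-(X,Y)\neq\emptyset$, which is needed for $P\Phi_-(X,Y)$ to make sense. As $Y$ is separable, the previous paragraph provides a surjection $T\colon X\to Y$; its range $R(T)=Y$ is closed and finite codimensional, so $T\in\Phi_-(X,Y)$.

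Next I would verify the subspace condition of Theorem \ref{adding-}: every closed infinite codimensional subspace $M$ of $Y$ is to be contained in a closed infinite codimensional subspace $N$ isomorphic to a quotient of $X$. Here one may simply take $N=M$. Indeed, $M$ is a closed subspace of the separable space $Y$, hence separable, and therefore isomorphic to a quotient of $X$ by the first paragraph; and $M$ is infinite codimensional by assumption. With all three hypotheses confirmed, Theorem \ref{adding-} gives $P\Phi_-(X,Y)=\SC(X,Y)$ (the statement's $\Phi_-$ being a misprint for $P\Phi_-$).

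I do not expect a genuine obstacle here: once the universal quotient property of $\ell_1$ is invoked, the argument is a routine check of hypotheses. The only step demanding attention is the choice $N=M$ in the subspace condition, and it succeeds precisely because separability of $Y$ passes to all of its subspaces while $\ell_1$, and hence $X$, surjects onto every separable space.
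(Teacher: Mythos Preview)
Your proposal is correct and follows exactly the same route as the paper: invoke the universal quotient property of $\ell_1$ so that every separable space (in particular $Y$ and each of its closed subspaces) is a quotient of $X$, and then apply Theorem \ref{adding-}. The paper's proof is a single sentence stating this fact; your version is simply a more detailed verification of the hypotheses (including the check that $\Phi_-(X,Y)\neq\emptyset$ and the observation that one may take $N=M$), and you correctly flag the misprint $\Phi_-$ for $P\Phi_-$ in the conclusion.
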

\begin{proof}
It is well-known that every separable Banach space is isomorphic to a quotient of $\ell_1$.
\end{proof}

The class $\Phi \mathcal{C}(X,Y)$ is surjective in the following sense:

\begin{Prop}
Given $K\in \Lc(X,Y)$ and a surjective operator $Q\in \Lc(Z,X)$, if   $KQ \in\Phi \mathcal{C}(Z,Y)$ then $K \in \Phi \mathcal{C}(X,Y)$. 
\end{Prop}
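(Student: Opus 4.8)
The plan is to mirror the proof of the injectivity of $\cS$ given earlier, replacing composition on the left with an into-isomorphism by composition on the right with the surjection $Q$. Concretely, to show that $K\in\cC(X,Y)$ I would take an arbitrary $S\in\Lc(X,Y)$ with $[S,K]\in\Phi_-(X\times X,Y)$ and prove that $S\in\Phi_-(X,Y)$.

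First I would introduce the operator $Q\times Q\in\Lc(Z\times Z,X\times X)$ defined by $(Q\times Q)(z_1,z_2)=(Qz_1,Qz_2)$, which is surjective because $Q$ is. The key algebraic identity is $[SQ,KQ]=[S,K](Q\times Q)$, since both operators send $(z_1,z_2)$ to $SQz_1+KQz_2$. Next I would transfer the $\Phi_-$ property through this composition: because $Q\times Q$ is surjective we have $R([SQ,KQ])=R([S,K](Q\times Q))=R([S,K])$, and the latter range is finite codimensional as $[S,K]\in\Phi_-$; hence $[SQ,KQ]\in\Phi_-(Z\times Z,Y)$. Now the hypothesis $KQ\in\cC(Z,Y)$, applied to the pair $(SQ,KQ)$, yields $SQ\in\Phi_-(Z,Y)$. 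Finally, surjectivity of $Q$ gives $R(S)=R(SQ)$, which is therefore finite codimensional, so $S\in\Phi_-(X,Y)$. This is exactly what is needed to conclude $K\in\cC(X,Y)$.

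There is no serious obstacle here: the whole argument rests on the single observation that precomposing with a surjection leaves the range unchanged, so that finite codimensionality of ranges (and hence membership in $\Phi_-$) passes back and forth between an operator $T$ and $TQ$. The only points to check with any care are the factorization $[SQ,KQ]=[S,K](Q\times Q)$ and that $Q\times Q$ inherits surjectivity from $Q$; both are immediate from the definitions.
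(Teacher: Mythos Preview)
Your proposal is correct and follows essentially the same route as the paper's proof: introduce $Q\times Q$, use the factorization $[SQ,KQ]=[S,K](Q\times Q)$ to get $[SQ,KQ]\in\Phi_-$, apply the hypothesis $KQ\in\cC(Z,Y)$ to obtain $SQ\in\Phi_-$, and then pass back to $S\in\Phi_-$ via surjectivity of $Q$. The only difference is that you spell out the range-equality justification for the $\Phi_-$ transfers, which the paper leaves implicit.
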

\begin{proof}
Let $S\in \Lc(X,Y)$ such that $[S,K] \in \Phi_-(X \times X, Y)$, and let $Q:Z \rightarrow X$ be a surjective operator. Then  the operator $Q\times Q\in \Lc(Z \times Z,X\times X)$ defined by by $(Q\times Q)(a,b)=(Qa,Qb)$ is  surjective. Thus $[S,K](Q\times Q)= [SQ,KQ]$ is in $\Phi_-(Z \times Z,Y)$. Since $KQ \in\Phi \mathcal{C}(Z,Y)$  we obtain $SQ \in \Phi_-(Z , Y)$, hence $S \in \Phi_-(X,Y)$, and we conclude $K \in \Phi \mathcal{C}(X,Y)$. 
\end{proof}

The dual space $(X\times X, \|\cdot \|_\infty)^*$ can be identified with $(X^*\times X^*, \|\cdot\|_1)$ in the obvious way. Hence the conjugate operator  $[S,T]^*$ can be identified with $(S^*,T^*)$. Indeed, for $x^*\in X^*$ and $x\in X$ we have
\begin{eqnarray*}
\langle [S,T]^* x^*, x\rangle &=& \langle x^*, [S,T]x\rangle= \langle x^*, Sx+Tx\rangle\\
&=& \langle S^*x^*+T^*x^*, x\rangle= \langle (S^*,T^*)x^*, x\rangle.
\end{eqnarray*}

As a consequence, $[S,T]\in\Phi_-$ if and only if $(S^*,T^*)\in \Phi_+$. Similarly, $(S,T)^*$ can be identified with $[S^*,T^*]$.

The following result describes the behavior of the classes of $\Phi$-singular and $\Phi$-cosingular operators under duality. 

\begin{Prop} Let $K\in\Lc(X,Y)$. 
\begin{enumerate}
\item If $K^* \in \Phi  \mathcal{S}(Y^*, X^*)$ then $K \in \Phi \mathcal{C}(X,Y)$.
\item If $K^* \in \Phi  \mathcal{C}(Y^*, X^*)$ then $K \in \Phi \mathcal{S}(X,Y)$
\end{enumerate}
\end{Prop}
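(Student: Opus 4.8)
The plan is to prove both implications by transporting the defining condition of the relevant class through adjoints, combining the two identifications recorded immediately above the statement, namely $[S,T]^*=(S^*,T^*)$ and $(S,T)^*=[S^*,T^*]$, with the standard semi-Fredholm dualities $A\in\Phi_+\iff A^*\in\Phi_-$ and $A\in\Phi_-\iff A^*\in\Phi_+$ (the latter is exactly the equivalence $[S,T]\in\Phi_-\iff(S^*,T^*)\in\Phi_+$ already used in the excerpt).

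For part (1), I would fix an arbitrary $S\in\Lc(X,Y)$ with $[S,K]\in\Phi_-(X\times X,Y)$ and aim to deduce $S\in\Phi_-$, which is precisely what membership $K\in\cC(X,Y)$ demands. Dualizing and using $[S,K]^*=(S^*,K^*)$, the assumption $[S,K]\in\Phi_-$ becomes $(S^*,K^*)\in\Phi_+(Y^*,X^*\times X^*)$. Now the hypothesis $K^*\in\cS(Y^*,X^*)$, applied to the operator $S^*\in\Lc(Y^*,X^*)$, forces $S^*\in\Phi_+(Y^*,X^*)$; and semi-Fredholm duality then returns $S\in\Phi_-(X,Y)$, as required.

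Part (2) is the dual argument. Given $S\in\Lc(X,Y)$ with $(S,K)\in\Phi_+(X,Y\times Y)$, I would use $(S,K)^*=[S^*,K^*]$ together with $A\in\Phi_+\iff A^*\in\Phi_-$ to rewrite this as $[S^*,K^*]\in\Phi_-(Y^*\times Y^*,X^*)$. The hypothesis $K^*\in\cC(Y^*,X^*)$, applied to $S^*$, yields $S^*\in\Phi_-(Y^*,X^*)$, and duality gives $S\in\Phi_+(X,Y)$, so $K\in\cS(X,Y)$.

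I expect no genuine difficulty here; the only points needing care are bookkeeping. First, after dualizing, the doubled space must carry the norm for which the definitions of $\cS$ and $\cC$ were framed: this is exactly guaranteed by the identifications $(X\times X,\|\cdot\|_\infty)^*\cong(X^*\times X^*,\|\cdot\|_1)$ and $(Y\times Y,\|\cdot\|_1)^*\cong(Y^*\times Y^*,\|\cdot\|_\infty)$ stated above, so the coupled adjoints land in the correctly normed product spaces. Second, the defining property of the dual class quantifies over all operators, whereas we only instantiate it at the particular adjoint $S^*$ coming from an arbitrary $S\in\Lc(X,Y)$; since each such $S$ produces an admissible $S^*$, the universal quantifier in the desired conclusion is fully covered. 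I would also remark that the nonemptiness conditions ensuring the target classes $\cC(X,Y)$ and $\cS(X,Y)$ are defined are inherited from the hypotheses through the same semi-Fredholm duality.
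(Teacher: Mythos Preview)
Your proposal is correct and follows exactly the paper's argument: dualize the coupled operator via the identifications $[S,K]^*\equiv(S^*,K^*)$ and $(S,K)^*\equiv[S^*,K^*]$, apply the hypothesis on $K^*$, and return via semi-Fredholm duality. The paper's proof of (1) is verbatim your first paragraph, and for (2) it simply says ``the proof is similar''; your additional remarks on norms and nonemptiness are careful bookkeeping the paper leaves implicit.
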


\begin{proof} (1) Let $S\in \Lc(X,Y)$ such that $[S,K] \in \Phi_-(X \times X, Y)$. Then $[S,K]^* \in \Phi_+$. Since $[S,K]^*\equiv (S^*,K^*)$, we have $(S^*,K^*) \in \Phi_+(Y^*, X^* \times X^*)$, and from $K^* \in \Phi \mathcal{S}(Y^*, X^*)$ we obtain $S^* \in \Phi_+(Y^*, X^*)$; therefore $S \in \Phi_-(X, Y)$, and hence $K \in \Phi \mathcal{C}(X,Y)$.
\medskip

The proof of (2) is similar. 
\end{proof}

\noindent \textsc{Statement.}
The authors have no competing interests to declare that are relevant to the content of this article. 


\end{document}